\theoremstyle{plain}
\newtheorem{theorem}{Theorem}[section]
\newtheorem{lemma}[theorem]{Lemma}
\theoremstyle{definition}
\newtheorem{definition}[theorem]{Definition}
\newtheorem{remark}[theorem]{Remark}
\theoremstyle{remark}
\title{Initial and Boundary Value Problems for Fractional differential equations involving Atangana-Baleanu  Derivative}
\author [1]{Fatma Al-Musalhi}
\author [1]{Nasser Al-Salti}
\author [2]{Erkinjon Karimov} 
\affil[1] {Department of Mathematics and Statistics, Sultan Qaboos University, P.O. Box 36 Al-Khoudh, Oman}
\affil[2] {Institute of Mathematics named after V.I.Romanovskiy, Academy of Sciences of
the Republic of Uzbekistan, Tashkent 125, Uzbekistan.}
\begin{document}
\maketitle
\begin{abstract}
Initial value problem involving Atangana-Baleanu derivative is considered. An Explicit solution of the given  problem is obtained by reducing the differential equation to Volterra integral equation of second kind and by using  Laplace transform.  To find the solution of the Volterra equation, the successive approximation method is used and a lemma simplifying  the resolvent kernel  has been presented. The use of the given initial value problem is illustrated by considering a boundary value problem in which the solution is expressed in the form of series expansion using orthogonal basis obtained by separation of variables.
\end{abstract}

\section{Introduction and Preliminaries}
\subsection{Introduction and related works}
Recently, two newly definitions of fractional derivative without
singular kernel were suggested, namely, Caputo-Fabrizio fractional derivative \cite{CF} and Atangana-Baleanu fractional derivative \cite{atang}. These new derivatives have been applied to real life problems, for example,  in the fields of  thermal science, material sciences, groundwater modelling and mass-spring system \cite{alkahtani}, \cite{alkatan}, \cite{algahtani}, \cite{alsers}, \cite{atang} and have been considered in a number of other recent work, see for example, \cite{alserk}, \cite{atangk}, \cite{CF2}, \cite{jefainalgahtani}, \cite{LN}, \cite{sheikh}.
The main difference between these two definitions  is that Caputo-Fabrizio  derivative is based on exponential kernel while  Atangana-Baleanu definition used Mittag-leffler function as a non-local kernel.  The non-locality of the kernel gives better description of the memory within structure with different scale.  These two new derivatives  are defined as follows 
\begin{definition}\label{CF}
Let $f\in H^{1}(a,b),$ $b>a,$ $\alpha\in[0,1]$. The Caputo-Fabrizio fractional derivative is defined as 
\begin{equation}
\,^{CF}\,_{a}D^{\alpha}_{t} f(t)=\dfrac{B(\alpha)}{1-\alpha} \int_{a}^{t} f'(s) \exp \left[ \dfrac{-\alpha}{1-\alpha}(t-s)\right]ds,  \end{equation}
and the Atangana-Baleanu fractional  derivative is given by 
\begin{equation}\label{abcd}
\,^{ABC}\,_{a}D^{\alpha}_{t} f(t)=\dfrac{B(\alpha)}{1-\alpha} \int_{a}^{t} f'(s) E_{\alpha} \left[ \dfrac{-\alpha}{1-\alpha}(t-s)^{\alpha}\right]ds,  \end{equation}
where $B(\alpha)$ denotes a normalization function such that $B(0)=B(1)=1$ and $$
E_{\alpha}(z)=\sum_{k=0}^{\infty}\dfrac{z^k}{\Gamma(\alpha k +1)},\quad\text{Re}(\alpha)>0,\,  z\in C,
$$
is the  Mittag-leffler function of one parameter \cite{pod}.
\end{definition} 

For properties related to these derivatives see \cite{atang}, \cite{atangk}, \cite{LN}.  In this paper, we are concerned with solutions to initial and boundary value problems for fractional differential equations involving Atangana-Baleanu derivative. We first recall the Mittag-Leffler of two parameters
$$
E_{\alpha,\beta}(z)=\sum_{k=0}^{\infty}\dfrac{z^k}{\Gamma(\alpha k +\beta)},\quad\text{Re}(\alpha)>0,\text{ Re}(\beta)>0,\,  z\in C,$$
 and a generalized  Mittag-Leffler function 
$$ E^{\delta}_{\alpha,\beta}(z)=\sum_{n=0}^{\infty}\dfrac{(\delta)_n z^n}{\Gamma(\alpha n+\beta) n!},$$
which is introduced by Prabhakar in \cite{Prabhakar},
where $\alpha, \beta, \delta\in\mathbb{C}$ with $\text{Re}(\alpha)>0, $ and $ (\delta)_n=\dfrac{\Gamma(\delta+n)}{\Gamma(\delta)}$ is Pochammer's symbol. For $\delta=1,$ it is reduced to Mittag-Leffler function. \\ Moreover, Mittag-Leffler function $E_{\alpha}(\lambda t^\alpha)$ is bounded(see \cite{Prabhakar}), i.e, 
\begin{equation}\label{bm}
E_{\alpha}(\lambda t^\alpha)\leq M,
\end{equation}
where $M$ denotes a positive constant.
 In \cite{atang}, Atangana and Baleanu considered the time fractional ordinary differential equation 
 $$ \,^{ABC}\,_{0}D^{\alpha}_{t} f(t)=u(t),$$
and on using Laplace transform they found the following solution 
$$f(t)= \,^{AB}\,_{0}I^{\alpha}_{t}u(t)=\dfrac{1-\alpha}{B(\alpha)} u(t)+\dfrac{\alpha}{B(\alpha)\Gamma(\alpha)}\int_{0}^{t}u(s)(t-s)^{\alpha-1} ds,$$
where they have defined
$$ \,^{AB}\,_{a}I^{\alpha}_{t}f(t)=\dfrac{1-\alpha}{B(\alpha)} f(t)+\dfrac{\alpha}{B(\alpha)\Gamma(\alpha)}\int_{a}^{t}f(s)(t-s)^{\alpha-1} ds$$
to be the fractional integral associated with  the fractional derivative $(\ref{abcd}).$ 
In this paper, we consider the following  initial value problem (IVP)
\begin{equation}\label{ivp}
\begin{array}{c}
\,^{ABC}\,_{0}D^{\alpha}_{t} u(t)-\lambda u(t)=f(t),\quad t\geq 0,\\
 u(0)=u_{0},
\end{array}
\end{equation}
where $\lambda,\,u_{0}\in \mathbb{R}.$  The solution of this IVP is obtained by two different methods, namely, by reducing it to Volterra integral equation of the second kind and by using Laplace transform. The use of such IVP is illustrated by considering a boundary value problem in which the solution is expressed in the form of series expansion using orthogonal basis obtained by separation of variables. The rest of the paper is organized as follows: at the end of this section, we present a lemma which is important for simplifying the resolvent kernel of the Volterra equation. Then, section $2$ is devoted for our main result which is the explicit solution of the  IVP $(\ref{ivp})$.  We conclude this paper by considering a boundary value problem where we have utlized the solution of the IVP $(\ref{ivp})$.
\subsection{Preliminaries}
As metioned earlier, one way to solve the IVP $(\ref{ivp})$ is to reduce it to a  Volterra integral equation and in order to simplify our calculations, namely the resolvent kernel of the Volterra equation, we have established the following Lemma:
\begin{lemma}\label{sum}
Let $\lambda\in\mathbb{R}$ and $0<\alpha<1$ with $\lambda\neq \dfrac{B(\alpha)}{1-\alpha}$, then  
\begin{equation}
\begin{array}{c}
\displaystyle\sum_{i=1}^{\infty}\left(\dfrac{B(\alpha)}{B(\alpha)-\lambda(1-\alpha)} \right)^i\left(\dfrac{\alpha}{1-\alpha} \right)^i(t-\xi)^{i\alpha-1}E^i_{\alpha , i\alpha}\left( \dfrac{-\alpha}{1-\alpha}(t-\xi)^\alpha\right)\\=\dfrac{B(\alpha)}{B(\alpha)-\lambda (1-\alpha)}\dfrac{\alpha}{1-\alpha}(t-\xi)^{\alpha-1}E_{\alpha,\alpha}\left[ \dfrac{\alpha\lambda}{B(\alpha)-\lambda(1-\alpha)}(t-\xi)^{\alpha}\right].\end{array}
\end{equation}
\end{lemma}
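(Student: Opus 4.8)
The plan is to turn both sides into power series in $(t-\xi)$ and match coefficients. To lighten the notation I would write $\tau=t-\xi$, $c=\frac{\alpha}{1-\alpha}$, and $A=\frac{B(\alpha)}{B(\alpha)-\lambda(1-\alpha)}$; the hypothesis $\lambda\neq\frac{B(\alpha)}{1-\alpha}$ guarantees $B(\alpha)-\lambda(1-\alpha)\neq 0$, so $A$ is well defined. First I would expand each Prabhakar function on the left-hand side by its defining series
$$E^i_{\alpha,i\alpha}(-c\tau^\alpha)=\sum_{n=0}^{\infty}\frac{(i)_n(-c\tau^\alpha)^n}{\Gamma(\alpha n+i\alpha)\,n!},$$
so that the left-hand side becomes a double sum over $i\ge 1$ and $n\ge 0$ whose general term carries the power $\tau^{(i+n)\alpha-1}$ and the sign $(-1)^n$.

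The decisive step is to collect terms by the total power of $\tau$. Setting $m=i+n$ and letting $i$ run from $1$ to $m$ (with $n=m-i$), the inner coefficient involves $\frac{(i)_{m-i}}{(m-i)!}$. Here I would use $(i)_{m-i}=\Gamma(m)/\Gamma(i)=(m-1)!/(i-1)!$, which turns this quotient into the binomial coefficient $\binom{m-1}{i-1}$. The inner sum over $i$ then reads
$$S_m=\sum_{i=1}^{m}A^i(-1)^{m-i}\binom{m-1}{i-1}=A\sum_{j=0}^{m-1}\binom{m-1}{j}A^{j}(-1)^{m-1-j}=A\,(A-1)^{m-1},$$
by the binomial theorem after the shift $j=i-1$. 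This identity is the crux of the argument; everything surrounding it is bookkeeping.

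It then remains to resum. Substituting $S_m=A(A-1)^{m-1}$ and reindexing with $k=m-1$, the left-hand side collapses to
$$A\,c\,\tau^{\alpha-1}\sum_{k=0}^{\infty}\frac{\big(c(A-1)\tau^\alpha\big)^{k}}{\Gamma(\alpha k+\alpha)}=A\,c\,\tau^{\alpha-1}E_{\alpha,\alpha}\!\big(c(A-1)\tau^\alpha\big).$$
A short computation gives $A-1=\frac{\lambda(1-\alpha)}{B(\alpha)-\lambda(1-\alpha)}$, whence $c(A-1)=\frac{\alpha\lambda}{B(\alpha)-\lambda(1-\alpha)}$, while $Ac=\frac{B(\alpha)}{B(\alpha)-\lambda(1-\alpha)}\frac{\alpha}{1-\alpha}$; inserting these back reproduces precisely the right-hand side of the Lemma.

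The only genuinely analytic point to address is the interchange of the two summations performed when passing to the index $m=i+n$. I would justify it by absolute convergence: taking absolute values, the inner sum over $i$ is dominated by $|A|(1+|A|)^{m-1}$, so the rearranged double series is majorized term-by-term by $\sum_{m\ge 1}|c|^m|A|(1+|A|)^{m-1}\tau^{m\alpha-1}/\Gamma(\alpha m)$. Since $1/\Gamma(\alpha m)$ decays faster than any geometric sequence in $m$, this majorant converges for every fixed $\tau=t-\xi>0$; hence the double series converges absolutely, Fubini's theorem for series applies, and all the term-by-term manipulations above are legitimate. This convergence check is really the whole difficulty of the proof, the algebraic core being the one-line binomial collapse of $S_m$.
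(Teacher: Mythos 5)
Your proof is correct and follows essentially the same route as the paper: expand each Prabhakar function $E^i_{\alpha,i\alpha}$ into its defining series, collect terms by the total power of $(t-\xi)^\alpha$, recognize the resulting coefficients as $A(A-1)^{m-1}$ (the paper's bracketed expressions $A-1$, $1-2A+A^2,\dots$ are exactly these binomial expansions), and resum into $E_{\alpha,\alpha}$ with argument $c(A-1)(t-\xi)^\alpha=\frac{\alpha\lambda}{B(\alpha)-\lambda(1-\alpha)}(t-\xi)^\alpha$. Your version is in fact more complete than the paper's: the paper only displays the terms up to $i=3$ and infers the pattern, whereas you prove the general coefficient identity via $(i)_{m-i}/(m-i)!=\binom{m-1}{i-1}$ and the binomial theorem, and you justify the series rearrangement by absolute convergence, a step the paper passes over silently.
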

\begin{proof}

We begin by expanding the following series:
$$\begin{array}{l}
\displaystyle\sum_{i=1}^{\infty}\left(\dfrac{B(\alpha)}{B(\alpha)-\lambda(1-\alpha)} \right)^i\left(\dfrac{\alpha}{1-\alpha} \right)^i(t-\xi)^{i\alpha-1}E^i_{\alpha , i\alpha}\left( \dfrac{-\alpha}{1-\alpha}(t-\xi)^\alpha\right)\\

=\left(\dfrac{B(\alpha)}{B(\alpha)-\lambda(1-\alpha)} \right)\left(\dfrac{\alpha}{1-\alpha} \right) (t-\xi)^{\alpha-1}E_{\alpha , \alpha}\left( \dfrac{-\alpha}{1-\alpha}(t-\xi)^\alpha\right)+\\
\quad\left(\dfrac{B(\alpha)}{B(\alpha)-\lambda(1-\alpha)} \right)^2\left(\dfrac{\alpha}{1-\alpha} \right)^2 (t-\xi)^{2\alpha-1}E^2_{\alpha , 2\alpha}\left( \dfrac{-\alpha}{1-\alpha}(t-\xi)^{2\alpha}\right)+\\
\quad\left(\dfrac{B(\alpha)}{B(\alpha)-\lambda(1-\alpha)} \right)^3\left(\dfrac{\alpha}{1-\alpha} \right)^3 (t-\xi)^{3\alpha-1}E^3_{\alpha , 3\alpha}\left( \dfrac{-\alpha}{1-\alpha}(t-\xi)^{3\alpha}\right)+
\cdots.\\
\end{array}
$$
Using the definition of  Mittag-Leffler functions, it can be written as follows:
$$\begin{array}{l}
\left(\dfrac{B(\alpha)}{B(\alpha)-\lambda(1-\alpha)} \right)\left(\dfrac{\alpha}{1-\alpha} \right)(t-\xi)^{\alpha-1}\displaystyle\sum_{n=0}^{\infty}\dfrac{(1)_n\left( \dfrac{-\alpha}{1-\alpha}(t-\xi)^{\alpha}\right)^n}{\Gamma (\alpha n+\alpha)n!} +\\
\left(\dfrac{B(\alpha)}{B(\alpha)-\lambda(1-\alpha)} \right)^2\left(\dfrac{\alpha}{1-\alpha} \right)^2(t-\xi)^{2\alpha-1}\displaystyle\sum_{n=0}^{\infty}\dfrac{(2)_n\left( \dfrac{-\alpha}{1-\alpha}(t-\xi)^{\alpha}\right)^n}{\Gamma (\alpha n+2\alpha)n!}+\\
\left(\dfrac{B(\alpha)}{B(\alpha)-\lambda(1-\alpha)} \right)^3\left(\dfrac{\alpha}{1-\alpha} \right)^3(t-\xi)^{3\alpha-1}\displaystyle\sum_{n=0}^{\infty}\dfrac{(3)_n\left( \dfrac{-\alpha}{1-\alpha}(t-\xi)^{\alpha}\right)^n}{\Gamma (\alpha n+3\alpha)n!}+\cdots ,\\

\end{array}
$$
and expanding the series representations of Mittag-Leffler functions, gives
$$\begin{array}{l}
\left(\dfrac{B(\alpha)}{B(\alpha)-\lambda(1-\alpha)} \right)\left(\dfrac{\alpha}{1-\alpha}\right)  (t-\xi)^{\alpha-1}\left[ \dfrac{1}{\Gamma(\alpha)}-\left( \dfrac{\alpha}{1-\alpha}\right)\dfrac{ (t-\xi)^\alpha}{\Gamma(2\alpha)}
+ \left( \dfrac{\alpha}{1-\alpha}\right) ^2\dfrac{(t-\xi)^{2\alpha}}{\Gamma(3\alpha)}+\cdots \right] \\
+\left(\dfrac{B(\alpha)}{B(\alpha)-\lambda(1-\alpha)} \right)^{2}\left(\dfrac{\alpha}{1-\alpha}\right)^{2}  (t-\xi)^{2\alpha-1}\left[ \dfrac{1}{\Gamma(2\alpha)}-2\left( \dfrac{\alpha}{1-\alpha}\right)\dfrac{ (t-\xi)^\alpha}{\Gamma(3\alpha)}
+ \cdots \right] \\
+\left(\dfrac{B(\alpha)}{B(\alpha)-\lambda(1-\alpha)} \right)^{3}\left(\dfrac{\alpha}{1-\alpha}\right)^{3}  (t-\xi)^{3\alpha-1}\left[ \dfrac{1}{\Gamma(3\alpha)}
+ \cdots \right].
\end{array}
$$
Now, combining like terms, we have
$$\begin{array}{l}
\left(\dfrac{B(\alpha)}{B(\alpha)-\lambda(1-\alpha)} \right)\left(\dfrac{\alpha}{1-\alpha}\right)\dfrac{(t-\xi)^{\alpha-1}}{\Gamma(\alpha)}+
\left(\dfrac{B(\alpha)}{B(\alpha)-\lambda(1-\alpha)} \right)\left(\dfrac{\alpha}{1-\alpha}\right)^2\dfrac{(t-\xi)^{2\alpha-1}}{\Gamma(2\alpha)} \\
\left[\dfrac{B(\alpha)}{B(\alpha)-\lambda(1-\alpha)} -1 \right]+\left(\dfrac{B(\alpha)}{B(\alpha)-\lambda(1-\alpha)} \right)\left(\dfrac{\alpha}{1-\alpha}\right)^3\dfrac{(t-\xi)^{3\alpha-1}}{\Gamma(3\alpha)} \left[1-2\dfrac{B(\alpha)}{B(\alpha)-\lambda(1-\alpha)} \right. \\\left. +\left( \dfrac{B(\alpha)}{B(\alpha)-\lambda(1-\alpha)}\right) ^2 \right]+\cdots,\\
\end{array}
 $$  
and simplifying further gives
$$\begin{array}{l}
\left(\dfrac{B(\alpha)}{B(\alpha)-\lambda(1-\alpha)} \right)\left(\dfrac{\alpha}{1-\alpha}\right)(t-\xi)^{\alpha-1}\\\left[\dfrac{1} {\Gamma(\alpha)}+\dfrac{\alpha\lambda}{{B(\alpha)-\lambda(1-\alpha)} }\dfrac{(t-\xi)^{\alpha}}{\Gamma(2\alpha)}+ \left( \dfrac{\alpha\lambda}{{B(\alpha)-\lambda(1-\alpha)} }\right)^2\dfrac{(t-\xi)^{2\alpha}}{\Gamma(3\alpha)}+\cdots\right] \\
=\left(\dfrac{B(\alpha)}{B(\alpha)-\lambda(1-\alpha)} \right)\left(\dfrac{\alpha}{1-\alpha}\right)(t-\xi)^{\alpha-1}E_{\alpha,\alpha}\left[ \dfrac{\alpha\lambda}{B(\alpha)-\lambda(1-\alpha)}t^{\alpha}\right].
\end{array}$$

\end{proof}

\section{Main Result}
\subsection{Initial value problem}
Here, we consider the following problem:\\
Find a solution $u(t)\in H^{1}(0,T)$ that satisfies the following  equation
\begin{equation}\label{eq}
\,^{ABC}\,_{0}D^{\alpha}_{t} u(t)-\lambda u(t)=f(t),\qquad 0\leq t\leq T,\end{equation}
and the initial condition
\begin{equation}\label{ic}
u(0)=u_0,
\end{equation}
where $\lambda, u_0 \in\mathbb{R}$. The solution of this initial value problem is formulated in the following theorem:
\begin{theorem} If $\lambda\neq \dfrac{B(\alpha)}{1-\alpha}$, $f(t)\in C(0,T)$ and $f(0)=-\lambda u_0$, then the solution of the initial value problem $(\ref{eq})-(\ref{ic})$ is given by 
\begin{equation}\label{soln}
\begin{array}{ll}
u(t)&=\dfrac{B(\alpha)u_0}{B(\alpha)-\lambda(1-\alpha)}  E_{\alpha} \left[ \dfrac{\alpha \lambda }{B(\alpha)-\lambda(1-\alpha)}t^{\alpha}\right]+\dfrac{1-\alpha}{B(\alpha)-\lambda(1-\alpha)}f(t) \\&+ \dfrac{\alpha B(\alpha)}{\left(B(\alpha)-\lambda(1-\alpha)\right)^2}\displaystyle\int_{0}^{t} f(\xi) (t-\xi)^{\alpha-1}E_{\alpha,\alpha}\left[ \dfrac{\alpha\lambda}{B(\alpha)-\lambda(1-\alpha)}(t-\xi)^{\alpha}\right] d\xi.
\end{array} \end{equation}

\end{theorem}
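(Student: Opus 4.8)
The plan is to reduce the fractional differential equation (\ref{eq}) to a Volterra integral equation of the second kind and then solve it by successive approximations, using Lemma \ref{sum} to sum the resulting resolvent kernel in closed form. Throughout I abbreviate $c=\frac{\alpha}{1-\alpha}$ and $D=B(\alpha)-\lambda(1-\alpha)$, the latter being nonzero precisely because of the hypothesis $\lambda\neq\frac{B(\alpha)}{1-\alpha}$.

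First I would rewrite the Atangana--Baleanu derivative (\ref{abcd}) so that it acts on $u$ rather than on $u'$. Integrating by parts in the convolution $\int_0^t u'(s)E_\alpha(-c(t-s)^\alpha)\,ds$ and using $E_\alpha(0)=1$ together with the elementary identity $\frac{d}{d\tau}E_\alpha(-c\tau^\alpha)=-c\tau^{\alpha-1}E_{\alpha,\alpha}(-c\tau^\alpha)$ gives
\[
\,^{ABC}\,_{0}D^{\alpha}_{t}u(t)=\frac{B(\alpha)}{1-\alpha}\Big[u(t)-u_0E_\alpha(-ct^\alpha)-c\!\int_0^t u(s)(t-s)^{\alpha-1}E_{\alpha,\alpha}\!\big(-c(t-s)^\alpha\big)\,ds\Big].
\]
Substituting this into (\ref{eq}) and collecting the free $u(t)$ terms, whose coefficient is $\frac{B(\alpha)}{1-\alpha}-\lambda=\frac{D}{1-\alpha}\neq0$, yields the Volterra equation $u(t)=g(t)+\int_0^t K(t,s)\,u(s)\,ds$ with free term $g(t)=\frac{1-\alpha}{D}f(t)+\frac{B(\alpha)}{D}u_0E_\alpha(-ct^\alpha)$ and kernel $K(t,s)=\frac{B(\alpha)}{D}\,c\,(t-s)^{\alpha-1}E_{\alpha,\alpha}(-c(t-s)^\alpha)$, which is exactly the $i=1$ term in Lemma \ref{sum}.

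Next I would solve this equation by successive approximations, so that $u=g+\int_0^tR(t,s)g(s)\,ds$ with resolvent $R=\sum_{i\ge1}K_i$, the $K_i$ being the iterated kernels. Since $K$ is of convolution type, the iterated kernels are most easily identified through the Laplace transform: from $\mathcal L\{K\}(p)=\frac{B(\alpha)c}{D}\,(p^\alpha+c)^{-1}$ one gets $\mathcal L\{K_i\}(p)=(\frac{B(\alpha)c}{D})^i(p^\alpha+c)^{-i}$, and inverting via the Prabhakar-function transform $\mathcal L\{t^{i\alpha-1}E^i_{\alpha,i\alpha}(-ct^\alpha)\}=(p^\alpha+c)^{-i}$ shows that $K_i$ coincides with the $i$-th summand on the left-hand side of Lemma \ref{sum}. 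Hence the resolvent is precisely that series, and the Lemma collapses it to $R(t,s)=\frac{B(\alpha)}{D}\,c\,(t-s)^{\alpha-1}E_{\alpha,\alpha}\big(\frac{\alpha\lambda}{D}(t-s)^\alpha\big)$. Inserting $g$ into $u=g+\int_0^tR(t,s)g(s)\,ds$ and evaluating the two convolutions by the same transform identities then produces the three terms of (\ref{soln}): the $u_0$-part of $g$, convolved with $R$, telescopes to the single term $\frac{B(\alpha)u_0}{D}E_\alpha(\frac{\alpha\lambda}{D}t^\alpha)$, the relevant algebra being $c(\frac{B(\alpha)}{D}-1)=\frac{\alpha\lambda}{D}$, while the $f$-part yields $\frac{1-\alpha}{D}f(t)$ together with the integral term of coefficient $\frac{\alpha B(\alpha)}{D^2}$.

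The genuine work is not in the summation (delegated to Lemma \ref{sum}) but in justifying convergence and checking consistency. I would establish uniform convergence of the Neumann series on $[0,T]$ from the boundedness of the Mittag--Leffler function recorded in (\ref{bm}), which also guarantees $u\in H^1(0,T)$ and legitimizes the termwise inversions. The main obstacle I anticipate is confirming that the constructed $u$ actually satisfies the initial condition (\ref{ic}); here the hypothesis $f(0)=-\lambda u_0$ is exactly the compatibility condition needed, since evaluating (\ref{soln}) at $t=0$ gives $u(0)=\frac{B(\alpha)u_0+(1-\alpha)f(0)}{D}$, which equals $u_0$ iff $f(0)=-\lambda u_0$; this is in turn consistent with the fact that $\,^{ABC}\,_{0}D^{\alpha}_{t}u(0)=0$ forces $-\lambda u_0=f(0)$ in (\ref{eq}). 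As an independent cross-check I would re-derive (\ref{soln}) by Laplace-transforming (\ref{eq}) directly, using $\mathcal L\{\,^{ABC}\,_{0}D^{\alpha}_{t}u\}(p)=\frac{B(\alpha)(pU(p)-u_0)p^{\alpha-1}}{(1-\alpha)p^\alpha+\alpha}$ with $U=\mathcal L u$ and partial fractions, which reproduces the same three terms and confirms the computation.
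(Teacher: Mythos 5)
Your proposal is correct and follows essentially the same route as the paper: reduction of the equation to a Volterra integral equation of the second kind via integration by parts, solution by successive approximations whose resolvent series is summed by Lemma \ref{sum}, simplification of the resulting convolutions to obtain (\ref{soln}), and an independent Laplace-transform derivation (which is the paper's second method). The only minor differences are that you identify the iterated kernels $K_i$ through Laplace transforms of the convolution kernel rather than by direct integration using Prabhakar's Theorem 5 as the paper does, and that you explicitly verify the role of the compatibility condition $f(0)=-\lambda u_0$ in recovering $u(0)=u_0$, a check the paper leaves implicit.
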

\begin{proof}
Using the definition of Atangana-Baleanu fractional derivative, we have 
 $$\dfrac{B(\alpha)}{1-\alpha} \int_{0}^{t} u'(s) E_{\alpha} \left[ \dfrac{-\alpha}{1-\alpha}(t-s)^{\alpha}\right]ds-\lambda u(t) =f(t),$$
which on integrating by parts leads to 
$$
\begin{array}{c}
\left( \dfrac{B(\alpha)}{1-\alpha}-\lambda \right) u(t)-\dfrac{B(\alpha)}{1-\alpha}\displaystyle\int_{0}^{t} 
\dfrac{d}{ds }\left(  E_{\alpha} \left[ \dfrac{-\alpha}{1-\alpha}(t-s)^{\alpha}\right]\right)  u(s)ds =\\
f(t)+\dfrac{B(\alpha) u_0}{1-\alpha}  E_{\alpha} \left[ \dfrac{-\alpha}{1-\alpha}\,t^{\alpha}\right].
\end{array}$$
For $\lambda\neq \dfrac{B(\alpha)}{1-\alpha}$ , one can write the above equation as a Volterra integral equation of the second kind
\begin{equation}\label{inteq}
u(t)-\int_{0}^{t}u(s)K(t,s)ds=\widehat{f}(t),
\end{equation}
where 
$$K(t,s)= \dfrac{B(\alpha)}{B(\alpha)-\lambda(1-\alpha)}\dfrac{d}{ds } E_{\alpha} \left[ \dfrac{-\alpha}{1-\alpha}(t-s)^{\alpha}\right], $$
and
$$\widehat{f}(t)=\dfrac{B(\alpha)u_0}{B(\alpha)-\lambda(1-\alpha)} E_{\alpha} \left[ \dfrac{-\alpha}{1-\alpha}t^{\alpha}\right]+\dfrac{1-\alpha}{B(\alpha)-\lambda(1-\alpha)}f(t). $$
To solve equation $(\ref{inteq}),$ we use successive approximation method starting with 
$u_0(t)=\widehat{f}(t).$
Then,
$$\begin{array}{ll}
u_1(t)&=\widehat{f}(t)+\displaystyle\int_{0}^{t}u_{0}(s)\,K(t,s)ds\\
&=\widehat{f}(t)+\displaystyle\int_{0}^{t}\widehat{f}(s)\,K(t,s)ds,\end{array}$$
and similarly we obtain $u_2(t)$
$$\begin{array}{ll}
u_2(t)&=\widehat{f}(t)+\displaystyle\int_{0}^{t}u_{1}(s)\,K(t,s)ds\\
&=\widehat{f}(t)+\displaystyle\int_{0}^{t}\left( \widehat{f}(s)+\int_{0}^{s}\widehat{f}(\xi)K(s,\xi)d\xi\right) \,K(t,s)\,ds\\
&=\widehat{f}(t)+\displaystyle\int_{0}^{t} \widehat{f}(s)K(t,s)ds +\int_{0}^{t}\widehat{f}(\xi)d\xi \int_{\xi}^{t} K(s,\xi)\,K(t,s)\,ds\\
\end{array}$$
Set $K_2(t,\xi)=\displaystyle\int_{\xi}^{t} K(t,s)\,K(s,\xi) ds,$
so that 
 $$
 \begin{array}{ll}
 u_{2}(t)&= \widehat{f}(t)+\displaystyle\int_{0}^{t} \widehat{f}(s)K(t,s)ds +\int_{0}^{t}\widehat{f}(\xi)K_{2}(t,\xi)d\xi\\
 &=  \widehat{f}(t)+\displaystyle\int_{0}^{t} \widehat{f}(\xi)\left[ K_1(t,\xi) +K_{2}(t,\xi)\right] d\xi.\\
  \end{array} $$
Continuing the same process, the $n^{\text{th}}$ term will have the following form  
 $$u_{n}(t)= \widehat{f}(t)+\int_{0}^{t} \widehat{f}(\xi)\sum_{i=1}^{n}K_{i}(t,\xi)d\xi,$$
where 
 $$K_{1}(t,\xi)=K(t,\xi),\quad K_{i}(t,\xi)=\int_{\xi}^{t}K(t,s)K_{i-1}(s,\xi) ds,\quad i=2,3,\cdots,$$
which can be derived using mathematical induction.\\
 To obtain the general expression for the kernel $K_{i}(t,\xi)$, we substitute for $K(t,s)$  and start with
  $$\begin{array}{ll}
 K_{2}(t,\xi)&=\displaystyle\int_{\xi}^{t} K(t,s)K_{1}(s,\xi)ds\\
 &=\left( \dfrac{B(\alpha)}{B(\alpha)-\lambda(1-\alpha)}\right)^{2}\displaystyle\int_{\xi}^{t}\dfrac{d}{ds}\left( E_{\alpha} \left[ \dfrac{-\alpha}{1-\alpha}(t-s)^{\alpha}\right]\right) \dfrac{d}{d\xi}\left( E_{\alpha} \left[ \dfrac{-\alpha}{1-\alpha}(s-\xi)^{\alpha}\right]\right)  ds \\
&= \displaystyle\left( \dfrac{B(\alpha)}{B(\alpha)-\lambda(1-\alpha)}\right)^{2}\left( \dfrac{-\alpha}{1-\alpha}\right)^{2}\int_{\xi}^{t} (t-s)^{\alpha-1} E_{\alpha,\alpha} \left[ \dfrac{-\alpha}{1-\alpha}(t-s)^{\alpha}\right]\\
&\quad \qquad\qquad\qquad\qquad\qquad\qquad\qquad\qquad(s-\xi)^{\alpha-1} E_{\alpha,\alpha} \left[ \dfrac{-\alpha}{1-\alpha}(s-\xi)^{\alpha}\right]ds,\\
 \end{array}$$
whereupon using Theorem $5.$ in \cite{Prabhakar},  $K_{2}(t,\xi)$ reduces to  
  $$K_{2}(t,\xi)=\displaystyle\left( \dfrac{B(\alpha)}{B(\alpha)-\lambda(1-\alpha)}\right)^{2}\left( \dfrac{\alpha}{1-\alpha}\right)^{2} (t-\xi)^{2\alpha-1} E^{2}_{\alpha,2\alpha}\left[ \dfrac{-\alpha}{1-\alpha}(t-\xi)^{\alpha}\right].     $$
Repeating the same procedure for $K_{3}(t,\xi)$, we have
  $$\begin{array}{ll}
  K_{3}(t,\xi)&=\displaystyle\int_{\xi}^{t} K(t,s)K_{2}(s,\xi)ds\\
  &=\displaystyle\left( \dfrac{B(\alpha)}{B(\alpha)-\lambda(1-\alpha)}\right)^{3}\left( \dfrac{\alpha}{1-\alpha}\right)^{2}\displaystyle \int_{\xi}^{t}\dfrac{d}{ds}\left( E_{\alpha} \left[ \dfrac{-\alpha}{1-\alpha}(t-s)^{\alpha}\right]\right)\\
  &\quad\qquad \qquad\qquad\qquad\qquad\qquad\qquad\qquad(s-\xi)^{2\alpha-1} E^{2}_{\alpha,2\alpha}\left[ \dfrac{-\alpha}{1-\alpha}(s-\xi)^{\alpha}\right] ds \\
  &=\displaystyle\left( \dfrac{B(\alpha)}{B(\alpha)-\lambda(1-\alpha)}\right)^{3}\left( \dfrac{\alpha}{1-\alpha}\right)^{3}\displaystyle \int_{\xi}^{t}(t-s)^{\alpha-1}E_{\alpha,\alpha} \left[ \dfrac{-\alpha}{1-\alpha}(t-s)^{\alpha}\right]\\
  &\qquad\qquad\qquad\qquad\qquad\qquad\qquad\qquad\quad (s-\xi)^{2\alpha-1} E^{2}_{\alpha,2\alpha}\left[ \dfrac{-\alpha}{1-\alpha}(s-\xi)^{\alpha}\right] ds, \\
 \end{array}$$
 and end with 
 $$ K_{3}(t,\xi)=\displaystyle\left( \dfrac{B(\alpha)}{B(\alpha)-\lambda(1-\alpha)}\right)^{3}\left( \dfrac{\alpha}{1-\alpha}\right)^{3} (t-\xi)^{3\alpha-1} E^{3}_{\alpha,3\alpha}\left[ \dfrac{-\alpha}{1-\alpha}(t-\xi)^{\alpha}\right].$$
Consequently, the general expression for the kernel is given by
  $$K_{i}(t,\xi)=  \left( \dfrac{B(\alpha)}{B(\alpha)-\lambda(1-\alpha)}\right)^{i}\left( \dfrac{\alpha}{1-\alpha}\right)^{i} (t-\xi)^{i\alpha-1} E^{i}_{\alpha,i\alpha}\left[ \dfrac{-\alpha}{1-\alpha}(t-\xi)^{\alpha}\right],\, i=1,2,3,\cdots. $$
As  $n\rightarrow \infty$, the approximations of $u_{n}(t)$ converges  to the solution $u(t)$
$$u(t)=   \widehat{f}(t)+\int_{0}^{t}  \widehat{f}(\xi) \sum_{i=1}^{\infty} K_{i}(t,\xi)d\xi.$$
According to Lemma \ref{sum}, we get the following 
$$u(t)=\widehat{f}(t)+ \left( \dfrac{B(\alpha)}{B(\alpha)-\lambda(1-\alpha)}\right)\left( \dfrac{\alpha}{1-\alpha}\right)\int_{0}^{t} \widehat{f}(\xi) (t-\xi)^{\alpha-1}E_{\alpha,\alpha}\left[ \dfrac{\alpha\lambda}{B(\alpha)-\lambda(1-\alpha)}(t-\xi)^{\alpha}\right] d\xi,
 $$
simplifying the above integral using formula ($1.107$) in \cite{pod} and properties of Mittag-Leffler function, we obtain the desired solution given by $(\ref{soln})$. \\
An alternative way of finding the solution of the initial value problem $(\ref{eq})-(\ref{ic})$  is using Laplace transform method.
So, by applying Laplace transform to both sides of equation $(\ref{eq})$, we have 
$$\dfrac{B(\alpha)}{1-\alpha} \dfrac{s^{\alpha}U(s)-s^{\alpha-1}u(0)}{s^{\alpha}+\dfrac{\alpha}{1-\alpha}}-\lambda U(s)=F(s),$$
where $U(s)=\mathcal{L}\{u(t)\}(s)$ and $$\mathcal{L}\{ \,^{ABC}\,_{0}D^{\alpha}_{t} u(t)\}(s)=\dfrac{B(\alpha)}{1-\alpha} \dfrac{s^{\alpha}U(s)-s^{\alpha-1}u(0)}{s^{\alpha}+\dfrac{\alpha}{1-\alpha}}.$$
Simplifying and solving for $U(s)$, we get

$$\begin{array}{l}
U(s)= \dfrac{B(\alpha) s^{\alpha-1}u_0}{s^{\alpha} (B(\alpha)-\lambda(1-\alpha))-\lambda\alpha}+\dfrac{(1-\alpha)s^\alpha +\alpha}{s^{\alpha} (B(\alpha)-\lambda(1-\alpha))-\lambda\alpha}F(s),
\end{array}$$
which can be rewritten as
$$\begin{array}{ll}
U(s)&= \dfrac{B(\alpha) s^{\alpha-1}u_0}{\left( B(\alpha)-\lambda(1-\alpha) \right)\left[  s^{\alpha} -\dfrac{\lambda\alpha}{B(\alpha)-\lambda(1-\alpha)}\right] }\\&+\dfrac{(1-\alpha)s^\alpha+\alpha }{\left( B(\alpha)-\lambda(1-\alpha) \right)\left[  s^{\alpha} -\dfrac{\lambda\alpha}{B(\alpha)-\lambda(1-\alpha)}\right] }F(s).
\end{array}$$
Since the Laplace transform of Mittag-Leffler function is given by  $$ \mathcal{L}\{t^{\beta-1}E_{\alpha,\beta}(\lambda t^{\alpha})\}(s)=\dfrac{s^{\alpha-\beta}}{s^\alpha -\lambda},$$
then, applying Laplace inverse gives
$$\begin{array}{ll}
u(t)&=\dfrac{B(\alpha)u_0 }{ B(\alpha)-\lambda(1-\alpha) } E_{\alpha,1}\left( \dfrac{\alpha\lambda}{B(\alpha)-\lambda(1-\alpha)} t^{\alpha}\right) \\&+ \dfrac{1-\alpha}{B(\alpha)-\lambda(1-\alpha)}\left(\dfrac{d}{dt} E_{\alpha,1}\left( \dfrac{\alpha\lambda}{B(\alpha)-\lambda(1-\alpha)} t^{\alpha}\right) *f(t)\right)\\&+\dfrac{(1-\alpha)f(t)}{B(\alpha)-\lambda(1-\alpha)} +\dfrac{\alpha}{B(\alpha)-\lambda(1-\alpha)}\left( t^{\alpha-1} E_{\alpha,\alpha}\left( \dfrac{\alpha\lambda}{B(\alpha)-\lambda(1-\alpha)} t^{\alpha}\right)*f(t)\right).
\end{array}$$
Consequently, 
$$\begin{array}{ll}
u(t)&=\dfrac{B(\alpha)u_0}{ B(\alpha)-\lambda(1-\alpha) } E_{\alpha,1}\left( \dfrac{\alpha\lambda}{B(\alpha)-\lambda(1-\alpha)} t^{\alpha}\right)+\dfrac{(1-\alpha)}{B(\alpha)-\lambda(1-\alpha)} f(t)\\&
+ \dfrac{\alpha B(\alpha)}{B(\alpha)-\lambda(1-\alpha))^2}\left( t^{\alpha-1} E_{\alpha,\alpha}\left( \dfrac{\alpha\lambda}{B(\alpha)-\lambda(1-\alpha)} t^{\alpha}\right)*f(t)\right),
\end{array}$$
which is the same as the solution obtained by successive iterations.
  \end{proof}
\begin{remark} For the case $\lambda=0 $ and $u(0)=0$, we get 
  $$u(t)=\dfrac{1-\alpha}{B(\alpha)}f(t)+\dfrac{\alpha}{B(\alpha)\Gamma(\alpha)}\int_{0}^{t}f(\xi) (t-\xi)^{\alpha-1} d\xi,$$ 
  which coincides with the result obtained in \cite{atang}.
  \end{remark}

\subsection{Boundary value problem}
Now, we consider a direct problem of determining $ u(x,t) $ in a  rectangular domain\\ $\Omega=\left\lbrace (x,t):0<x<1,\; 0<t<T\right\rbrace,$ such that $u \in C^{2}(0, 1) \times H^{1}(0, T)$ and satisfies the following  initial-boundary value problem:
\begin{eqnarray}
&&\,^{ABC}\,_{0}D^{\alpha}_{t} u(x,t)-u_{xx}(x,t)=f(x,t), \quad\quad \quad (x,t) \in \Omega \label{sinverse}\\
&&\quad \quad u(0,t)=0,\quad u(1,t)=0,\hspace{2.2cm} 0\leq t\leq T,\label{ISBC}\\
&&\quad\quad\, u(x,0)=0,\;\quad\quad\quad \quad\quad\quad \quad \hspace{1.5cm} 0\leq x\leq 1, \label{ISIC}
\end{eqnarray}
where $f(x,t)$ is a given function.
We begin by using separation of variables method to solve the homogeneous equation corresponding to equation (\ref{sinverse}) aloong with the boundary conditions  (\ref{ISBC}). Thus, we obtain the following spectral problem:
\begin{eqnarray}\label{sepr}
 \left\lbrace \begin{array}{l}
X''+\lambda X=0,\\
X(0)=0,\quad X(1)=0.
\end{array}\right.
\end{eqnarray}
which is self adjoint and has the following eigenvalues  $$\lambda_k=(k\pi)^2,\quad k=1,2,3,\cdots .$$
The corresponding eigenfunctions are
\begin{equation} \label{sys1}
X_{k}=\sin(k \pi x) \quad k=1,2,3,\cdots.
\end{equation}
Since the system of eigenfunctions (\ref{sys1}) forms an orthogonal basis in $L^{2}(0,1)$, we can  then write the solution $u(x,t)$  and the given function $f(x,t)$  in the form of series expansions as follows:
\begin{equation}\label{solu}
u(x,t)=\sum_{k=1}^{\infty} u_k(t) \sin(k \pi x), 
\end{equation}
\begin{equation}\label{souf}
f(x,t)=\sum_{k=1}^{\infty} f_k(t) \sin(k \pi x), 
\end{equation}
where $u_{k}(t)$ is the unknown to be found and $f_{k}(t)$ is given by 
$f_{k}(t)=2\int_{0}^{1}f(x,t)\sin(k\pi x)dx.$
Substituting $(\ref{solu})$ and $(\ref{souf})$ into $(\ref{sinverse})$ and $(\ref{ISIC})$, we obtain the following  fractional differential equation
\begin{equation}\label{fde}
 \,^{ABC}\,_{0}D^{\alpha}_{t} u_k(t)+ k^2 \pi^2 u_k(t)=f_{k}(t),\end{equation}
along with the following condition
\begin{equation}\label{fdecond}
u_k(0)=0.
\end{equation}
Whereupon using Theorem $2.1$, the solution is given by 
$$\begin{array}{ll}
u_k(t)&= \dfrac{1-\alpha}{B(\alpha)+k^2\pi^2(1-\alpha)}f_k(t)+\dfrac{\alpha\, B(\alpha)}{(B(\alpha)+k^2\pi^2(1-\alpha))^2}\\& \displaystyle\int_{0}^{t} f_k(\xi) (t-\xi)^{\alpha-1}E_{\alpha,\alpha}\left[ \dfrac{-\alpha k^2\pi^2}{B(\alpha)+k^2\pi^2(1-\alpha)}(t-\xi)^{\alpha}\right]d\xi,
\end{array} $$
with $f_{k}(0)=0$, which is acheived by assuming  $f(x,0)=0$.
Thus,   the solution  $u(x,t)$ can now be written as $$\begin{array}{ll}
u(x,t)&= 
\displaystyle\sum_{k=1}^{\infty}\left( \dfrac{1-\alpha}{B(\alpha)+k^2\pi^2(1-\alpha)}f_k(t)+\dfrac{\alpha\, B(\alpha)}{(B(\alpha)+k^2\pi^2(1-\alpha))^2}\right. \\&\left.  \displaystyle\int_{0}^{t} f_k(\xi) (t-\xi)^{\alpha-1}E_{\alpha,\alpha}\left[ \dfrac{-\alpha k^2\pi^2}{B(\alpha)+k^2\pi^2(1-\alpha)}(t-\xi)^{\alpha}\right]d\xi\right) \sin(k \pi x).\end{array}$$
In order to complete the proof of existence, we  need to show the uniform convergence of the  series representations of $$u(x,t), \,u_{x}(x,t),\,u_{xx}(x,t),\,^{ABC}\,_{0}D^{\alpha}_{t} u(x,t).$$
Since Mittag-Leffler functions is bounded, then  the  uniform convergence of the series representation of $u(x,t)$ is ensured by assuming $f(x, \cdot) \in C(0, T)$.
Now, the series representation of $u_{xx}(x,t)$ is given by 
$$\begin{array}{ll}
u_{xx}(x,t)&=
-\displaystyle\sum_{k=1}^{\infty}\left( \dfrac{k^2\pi^2(1-\alpha)}{B(\alpha)+k^2\pi^2(1-\alpha)}f_k(t)+\dfrac{k^2\pi^2\alpha\, B(\alpha)}{(B(\alpha)+k^2\pi^2(1-\alpha))^2}\right. \\&\left.  \displaystyle\int_{0}^{t} f_k(\xi) (t-\xi)^{\alpha-1}E_{\alpha,\alpha}\left[ \dfrac{-\alpha k^2\pi^2}{B(\alpha)+k^2\pi^2(1-\alpha)}(t-\xi)^{\alpha}\right]d\xi\right) \sin(k \pi x)\\
&= \displaystyle\sum_{k=1}^{\infty}f_{k}(t)\sin(k\pi x)+\displaystyle\sum_{k=1}^{\infty}\dfrac{B(\alpha)}{B(\alpha)+k^2\pi^2(1-\alpha)}\sin(k\pi x)\\& \displaystyle\int_{0}^{t} f'_k(\xi)E_{\alpha,1}\left[ \dfrac{-\alpha k^2\pi^2}{B(\alpha)+k^2\pi^2(1-\alpha)}(t-\xi)^{\alpha}\right]d\xi. 

\end{array}$$
Assuming $f_{t}(x,t)$ is integrable, it is clear that the second term of the above series converges uniformly. For convergence of the first term, we assume $f(0,t)=f(1,t)=0$ and use integration by parts  to get 
$$\begin{array}{ll}
\bigg|\displaystyle\sum_{k=1}^{\infty}f_{k}(t)\sin(k\pi x)\bigg|=\bigg|\displaystyle\sum_{k=1}^{\infty}\frac{1}{k \pi}f_{1k}(t)\sin(k\pi x)\bigg|

\leq\displaystyle\sum_{k=1}^{\infty} \dfrac{ 1}{k\pi} |f_{1k}(t)|, \end{array}$$
 where
$$
f_{1k}(t)=2\displaystyle\int_{0}^{1}f_{x}(x,t) \cos(k\pi x) \,dx.$$
Using the inequality $ab \leq \dfrac{1}{2}(a^2+b^2)$ and the  Bessel's  inequality, we then have the following estimate
$$\begin{array}{ll}
\bigg|\displaystyle\sum_{k=1}^{\infty}f_{k}(t)\sin(k\pi x)\bigg|&\leq 
\displaystyle\sum_{k=1}^{\infty} \dfrac{1}{2}\left( \dfrac{ 1}{k^2\pi^2} +|f_{1k}|^2\right) \\
&\leq \displaystyle\sum_{k=1}^{\infty}\dfrac{ 1}{2k^2\pi^2}+\dfrac{1}{2}\Vert f_{x}(x,t)\Vert_{L^2(0,1)}^2.

\end{array}$$
Therefore,  the expression of $u_{xx}(x,t)$ is uniformly convergent.
Finally,  the uniform convergence of $\,^{ABC}\,_{0}D^{\alpha}_{t}u(x,t)$, follows from equation $(\ref{sinverse})$. \\
The uniqueness of solution can be shown using the completeness properties of the system $\{\sin(k \pi x)\}.$\\
The main result for the direct problem can be summarized in the following theorem:
\begin{theorem}
Assume $f(x,t)\in C[0,1]\times C[0,T]$ such that
 $f(x,0)=0,$ $f(0,t)=f(1,t)=0,$ $f_{t}(x,t)\in L[0,T]$ and $f_{x}(x,t)\in L^{2}[0,1],$
then the problem (\ref{sinverse}) - (\ref{ISIC}) has  a unique solution $ u(x,t) $  given by
$$\begin{array}{ll}
u(x,t)&= 
\displaystyle\sum_{k=1}^{\infty}\left( \dfrac{1-\alpha}{B(\alpha)+k^2\pi^2(1-\alpha)}f_k(t)+\dfrac{\alpha\, B(\alpha)}{(B(\alpha)+k^2\pi^2(1-\alpha))^2}\right. \\&\left.  \displaystyle\int_{0}^{t} f_k(\xi) (t-\xi)^{\alpha-1}E_{\alpha,\alpha}\left[ \dfrac{-\alpha k^2\pi^2}{B(\alpha)+k^2\pi^2(1-\alpha)}(t-\xi)^{\alpha}\right]d\xi\right) \sin(k \pi x).\end{array}$$

where,
$$
f_{k}(t)=2\displaystyle\int_{0}^{1}f(x,t) \sin(k\pi x) \,dx.$$
\end{theorem}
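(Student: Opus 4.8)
The plan is to solve the problem by separation of variables, reducing the partial differential equation to a countable family of scalar initial value problems of exactly the type already treated in Theorem 2.1, and then to legitimise the formal series by establishing uniform convergence of the relevant derivatives and, finally, proving uniqueness.

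First I would set up the spectral problem associated with the homogeneous version of (\ref{sinverse}) and the boundary conditions (\ref{ISBC}). Separating $u(x,t)=X(x)T(t)$ yields the self-adjoint Sturm--Liouville problem (\ref{sepr}) with eigenvalues $\lambda_k=(k\pi)^2$ and eigenfunctions $\sin(k\pi x)$. Since $\{\sin(k\pi x)\}_{k\ge 1}$ is an orthogonal basis of $L^2(0,1)$, I would expand both $u$ and $f$ in Fourier sine series as in (\ref{solu})--(\ref{souf}), with $f_k(t)=2\int_0^1 f(x,t)\sin(k\pi x)\,dx$. Substituting termwise into (\ref{sinverse}) and matching coefficients gives, for each $k$, the scalar fractional initial value problem (\ref{fde})--(\ref{fdecond}); comparing with (\ref{eq}) this is the problem solved in Theorem 2.1 with $\lambda=-k^2\pi^2$ and $u_0=0$. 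The nondegeneracy hypothesis $\lambda\neq B(\alpha)/(1-\alpha)$ holds automatically because $\lambda=-k^2\pi^2<0$, while the compatibility condition $f(0)=-\lambda u_0$ reduces to $f_k(0)=0$, which is guaranteed by the assumption $f(x,0)=0$. Applying the explicit formula (\ref{soln}) to each $u_k(t)$ and summing produces the candidate series for $u(x,t)$.

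The heart of the argument, and what I expect to be the main obstacle, is to justify termwise differentiation by proving uniform convergence of the series for $u$, $u_x$, $u_{xx}$, and $\,^{ABC}\,_{0}D^{\alpha}_{t}u$. Convergence for $u$ itself is immediate from the Mittag-Leffler bound (\ref{bm}) together with continuity of $f(x,\cdot)$. The difficulty is $u_{xx}$, where two $x$-derivatives introduce a factor $k^2\pi^2$ that destroys naive convergence. The key is to rewrite the $u_{xx}$ series so this factor is absorbed: using $\frac{k^2\pi^2(1-\alpha)}{B(\alpha)+k^2\pi^2(1-\alpha)}=1-\frac{B(\alpha)}{B(\alpha)+k^2\pi^2(1-\alpha)}$ and integrating the convolution term by parts in $t$, the series splits into $\sum_k f_k(t)\sin(k\pi x)$ plus a remainder that converges once $f_t(x,t)$ is integrable in $t$ (the remainder carries the benign coefficient $B(\alpha)/(B(\alpha)+k^2\pi^2(1-\alpha))$ and a factor $f'_k$). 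For the surviving term $\sum_k f_k(t)\sin(k\pi x)$ I would integrate $f_k(t)$ by parts in $x$, using $f(0,t)=f(1,t)=0$ to gain a factor $1/(k\pi)$ and write $f_k=\frac{1}{k\pi}f_{1k}$, where $f_{1k}(t)=2\int_0^1 f_x(x,t)\cos(k\pi x)\,dx$ are the cosine coefficients of $f_x$. The elementary inequality $ab\le\frac12(a^2+b^2)$ followed by Bessel's inequality for $f_x(\cdot,t)\in L^2(0,1)$ then bounds the series by $\sum_k\frac{1}{2k^2\pi^2}+\frac12\|f_x(\cdot,t)\|_{L^2(0,1)}^2<\infty$, establishing uniform convergence of $u_{xx}$. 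Uniform convergence of $\,^{ABC}\,_{0}D^{\alpha}_{t}u$ then follows at once by reading it off the equation as $\,^{ABC}\,_{0}D^{\alpha}_{t}u=u_{xx}+f$.

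Finally I would prove uniqueness. If $u$ and $\tilde u$ are two solutions, their difference $w$ solves the homogeneous problem with $f\equiv 0$ and vanishing initial and boundary data. Projecting onto the basis, the coefficients $w_k(t)=2\int_0^1 w(x,t)\sin(k\pi x)\,dx$ satisfy (\ref{fde})--(\ref{fdecond}) with $f_k\equiv 0$, whose unique solution by Theorem 2.1 is $w_k\equiv 0$. Completeness of $\{\sin(k\pi x)\}$ in $L^2(0,1)$ then forces $w\equiv 0$, which gives uniqueness and completes the proof.
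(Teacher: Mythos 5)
Your proposal is correct and follows essentially the same route as the paper: separation of variables reducing to the scalar IVP of Theorem 2.1 with $\lambda=-k^2\pi^2$, $u_0=0$, then uniform convergence of the $u_{xx}$ series via the coefficient splitting, integration by parts in $t$ (using integrability of $f_t$) and in $x$ (using $f(0,t)=f(1,t)=0$), followed by the $ab\le\frac{1}{2}(a^2+b^2)$ and Bessel estimates, with the fractional derivative's convergence read off from the equation and uniqueness from completeness of $\{\sin(k\pi x)\}$. All key steps, including the observation that $f_k(0)=0$ follows from $f(x,0)=0$, coincide with the paper's argument.
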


\begin{description}
\item[Acknowledgements.]
The first two authors acknowledge financial support from The Research Council (TRC), Oman. This work is funded by TRC under the research agreement no. ORG/SQU/CBS/13/030.
\end{description}


\begin{thebibliography}{99}

\bibitem{alkahtani} B.S.T. Alkahtani, Chua’s circuit model with Atangana–Baleanu derivative with fractional order, Chaos, Solitons and Fractals, 89(2016), pp.547-551.

\bibitem{alkatan} B.S.T. Alkahtani and A. Atangana, Controlling the wave movement on the surface of shallow water with the Caputo-Fabrizio derivative with fractional order, Chaos, Solitons and Fractals, 89(2016), pp.539-546.

\bibitem{algahtani}  R.T. Algahtani, Atangana-Baleanu derivative with fractional order applied to the model of groundwater within an unconfined aquifer, J. Nonlinear Sci. App., 9(2016), pp.3647-3654.
 

\bibitem{alserk} N. Al-Salti, E. Karimov and S. Kerbal, Boundary-value problem for fractional heat equation involving Caputo-Fabrizio derivative, New Trend in Mathematical Sciences, 4 (2016), pp. 79-89.

\bibitem{alsers} N. Al-Salti, E. Karimov and K. Sadarangani , On a Differential Equation with Caputo-Fabrizio Fractional Derivative of Order $1< \beta\leq 2$ and Application to Mass-Spring-Damper System, Progr. Fract. Differ. Appl., 4 (2016), pp. 257-263.


\bibitem{atang} A. Atangana and D. Baleanu, New Fractional Derivatives with Nonlocal and Non-Singular Kernel: Theory and Application to Heat Transfer Model, Thermal Science, 20(2016), pp.763-769.

\bibitem{atangk} A. Atangana and I. Koca, Chaos in a simple nonlinear system with Atangana-Baleanu derivatives with fractional order, Chaos, Solitons and Fractals, 89(2016), pp.446-454.

\bibitem{CF} M. Caputo and and M. Fabrizio, A new Definition of Fractional Derivative without Singular Kernel, Progr. Fract. Differ. Appl. 1, 2 (2015), pp. 73-85.

\bibitem{CF2}  M. Caputo and and M. Fabrizio, Applications of New Time and Spatial Fractional Derivatives with Exponential Kernels, Progr. Fract. Differ. Appl. 2, 1 (2016), pp. 1-11.


\bibitem{jefainalgahtani} O. Jefain and  J. Algahtani, Comparing the  Atangana -Baleanu and Caputo-Fabrizio derivative with fractional order: Allen Cahn model, Chaos, Solitons and Fractals, 89(2016), pp.552-559.

\bibitem{LN} J. Losada and J. Nieto , Properties of a New Fractional Derivative without
Singular Kernel, Progr. Fract. Differ. Appl., 2 (2015), pp. 87-92.

\bibitem{pod} I. Podlubny,  Fractional differential equations. Academic Press Inc., San Diego, CA, 1999.
\bibitem{Prabhakar} T. R. Prabhakar, A singular integral equation with a generalized Mittag-Leffler function in the kernal, Yokohama Math. J., 19 (1971), pp. 7-15.

\bibitem{sheikh} N.A. Sheikh, F. Ali, M. Saqib, I. Khan, S.A.A. Jan, A.S. Alshomrani, M.S. Alghamdi, Comparison and analysis of the Atangana–Baleanu and Caputo–Fabrizio
fractional derivatives for generalized Casson fluid model with heat generation and chemical reaction, Results in Physics, 7 (2017), pp. 789-800.

\end{thebibliography}
\end{document}